\newcommand\C{{\mathbb C}}
\newcommand\F{{\mathbb F}}
\newcommand\Q{{\mathbb Q}}
\newcommand\R{{\mathbb R}}
\newcommand\Z{{\mathbb Z}}
\newcommand\N{{\mathbb N}}
\newtheorem{theorem}{Theorem}
\newtheorem{lemma}[theorem]{Lemma}
\theoremstyle{remark}
\newcommand\rC{{\mathrm C}}
\newcommand\rH{{\mathrm H}}
\newcommand\rM{{\mathrm M}}
\newcommand\rL{{\mathrm L}}
\begin{document}

\title[Counting decomposable polynomials]{Counting decomposable polynomials with integer coefficients}

\author{Art\= uras Dubickas}
\address{Institute of Mathematics, Faculty of Mathematics and Informatics, Vilnius University, 
Naugarduko 24, Vilnius LT-03225,  Lithuania}
\email{arturas.dubickas@mif.vu.lt}

\author{Min Sha}
\address{School of Mathematical Sciences, South China Normal University, Guangzhou, 510631, China}
\email{min.sha@m.scnu.edu.cn}

\subjclass[2010]{11C08, 11R06, 12D05}

\keywords{Decomposable polynomial, height, degree, asymptotic formula}

\begin{abstract}
A polynomial over a ring is called decomposable if it is a composition of two nonlinear polynomials. 
In this paper, we obtain sharp lower and upper bounds for the number of decomposable polynomials with integer coefficients of fixed degree and bounded height. 
Moreover, we obtain asymptotic formulas for the number of decomposable monic polynomials of even degree. 
For example, the number of monic sextic integer polynomials which are decomposable and of height at most $H$ 
is asymptotic to $(16\zeta(3)-5/4)H^3$
as $H \to \infty$.  
\end{abstract}

\maketitle

\section{Introduction}

\subsection{Motivation and background}

Let $R$ be a ring. 
A univariate polynomial $f \in R[x]$ is called \textit{decomposable} (over $R$) if it is a composition of two nonlinear polynomials $g, h \in R[x]$, namely, $f=g \circ h$ which means that $f(x)=g(h(x))$;   
otherwise, it is called \textit{indecomposable}. 

Starting from the foundational work of Ritt \cite{Ritt1} in 1922 on polynomial decompositions over the complex numbers $\C$, 
polynomial decompositions have been extensively studied. See, for instance, 
 \cite{Barton, Blank, Gath90, Gath90b, Kozen} for algorithmic questions, 
\cite{Dorey, Eng, FM, Gath14, Levi, Sch, Tor, Zan} for structural properties, 
 \cite{BCD} for a so-called specialization problem, 
 and \cite{BDN} for a so-called spectrum problem. 
 
 The motivation also partly comes from \textit{van der Waerden's conjecture} \cite{Waerden}. 
 This conjecture asserts that for monic polynomials of degree $n$ with integral coefficients in the interval $[-H, H]$, 
 at most $O(H^{n-1})$ such polynomials have Galois group not equal to the full symmetric group $S_n$. 
 For $n \le 4$, van der Waerden's conjecture was proven by Chow and Dietmann \cite{CD}. 
 Most recently, Bhargava released a proof of van der Waerden's conjecture for all degrees $n$ \cite{Bhargava}. 
 The main sources of polynomials with Galois group strictly smaller than $S_n$ are reducible polynomials, decomposable polynomials, 
 and (possibly) polynomials with square discriminant. 

In this paper we will study a counting problem of decomposable polynomials. 
One can expect that the decomposable polynomials form a small minority among all polynomials over a field. (The same happens for polynomials with various restrictions which we previously considered in \cite{du1, du2, du3, du4}.)

Counting decomposable polynomials over finite fields was first considered by Giesbrecht \cite{Gie}. 
He showed that the decomposable polynomials form an exponentially small fraction of all polynomials. Recently,
von zur Gathen \cite{Gath15} gave general approximations to the 
number of decomposable polynomials with rapidly decreasing relative error bounds; 
see \cite{Gath13} for some earlier work and \cite{Blank13,GGZ,Zie} for further improvements. Moreover,
von zur Gathen also established results on counting decomposable multivariate polynomials over finite fields \cite{Gath11} 
and estimated the density of real and complex decomposable polynomials \cite{Gath17}. 

In this paper, we aim to estimate the number of decomposable polynomials with integer coefficients 
(that is, defined over the rational integers $\Z$) of fixed degree and bounded height. 
Here, the \textit{height} of a complex polynomial is the maximum of the absolute values of its coefficients. 

The counting model we choose in this paper is to fix the degree and vary the coefficients of polynomials, 
which is called \textit{the large box model} in \cite{BSK}. 
We remark that there is another model which is of recent interest: \textit{the bounded box model}. 
In this new model one chooses the coefficients uniformly from a finite set and studies 
the statistics when the degree  goes to the infinity. 
Recent works on this include papers by Bary-Soroker and Kozma \cite{BSK}, Bary-Soroker, Koukoulopoulos and Kozma \cite{BSKK}, 
Breuillard and Varj{\' u} \cite{BV}, and the thesis of Akkerman \cite{Yonathan} which 
considers statistics of decomposable polynomials for the first time in this model.

\subsection{Set-up and approach}  \label{sec:setup}
A polynomial $f \in \Z[x]$ is called \textit{origin-preserving} if $f(0) = 0$ 
(note that in \cite{Gath13,Gath15} it is called \textit{original}).  
For any polynomials $f, g, h \in \Z[x]$, it is easy to see that for any integers $a \ne 0$ and $b$,  
$$
f = g \circ h \quad \textrm{if and only if} \quad af + b = (ag + b)\circ h. 
$$
So, we only need to consider the decompositions of origin-preserving polynomials. 

Moreover, if $f$ is origin-preserving and $f = g \circ h$, then $f= g^* \circ h^*$, where $g^*(x) = g(x+h(0))$ and $h^*(x) = h(x) - h(0)$ 
satisfying  $h^*(0) = 0$ and $g^*(0) = 0$. 
Hence, for an origin-preserving polynomial, we only need to consider its decompositions by origin-preserving polynomials. 

Throughout the paper, let $n \geq 2$ and $H \geq 2$ be two positive integers.  
We denote by $D_n(H)$ the set of decomposable monic origin-preserving polynomials $f \in \Z[x]$ of degree $n$ and height at most $H$. 
Then, by the above discussion, the number of decomposable monic polynomials $f \in \Z[x]$ of degree $n$ and height at most $H$ is equal to 
$$
(2H + 1) \cdot \# D_n(H). 
$$

Recall that the \textit{content} of a non-constant polynomial in $\Z[x]$ is the greatest common divisor of its coefficients. 

For non-monic polynomials in $\Z[x]$, by the above, we only need to consider non-monic origin-preserving polynomials with positive leading coefficient 
and moreover (by Lemma~\ref{lem:content}) only polynomials with content 1. 
Let $D_n^*(H)$ be the set of decomposable origin-preserving polynomials in $\Z[x]$ (not necessarily monic) 
 of degree $n$ with positive leading coefficient, content $1$ and height at most $H$. 
 
 Clearly, $D_n(H)$ is a subset of $D_n^*(H)$.  
 Our aim is to estimate the cardinalities $\#D_n(H)$ and $\#D_n^*(H)$ with respect to $H$. 
 
 By definition, if a polynomial $f$ is decomposable, then its degree $n=\deg f$ is a composite number. 
So, if $n$ is a prime, we clearly have $$\#D_n(H) =0 \quad \text{and} \quad \#D_n^*(H) =0.$$ 
Hence, it suffices to consider the case when $n$ is a composite number. 
 
We emphasize that in this paper we fix the degree $n$. Then, the key point is to control the heights of polynomials.
So the approach here is totally different from those in the finite field case (such as in \cite{Gath13, Gath15}). 
There are two reasons why we fix $n$: one is for simplicity, and the other is that we care more about the height (as in \cite{du1, du2, du3, du4}). 

Roughly speaking, our approach to count decomposable polynomials $f = g \circ h$  can be separated into two steps. 
The first step is to bound the coefficients of $g$ and $h$ in terms of the height of $f$ (which is at most $H$). A key result in this direction is Lemma~\ref{lem:height}. 
The second step is to count the number of possible candidates for $g$ and $h$ in order to deduce the number of decomposable polynomials.  

In presentation of our results we will use the Landau symbol $O$ and the Vinogradov symbol $\ll$. 
Recall that the assertions $U=O(V)$ and $U \ll V$ are both equivalent to the inequality $|U|\le cV$ with some constant $c>0$. 
Besides, $U \asymp V$ means that $U \ll V \ll U$. 
In this paper, if the constants implied in the symbols $O, \ll, \asymp$ depend on the degree $n$, 
we will write them respectively as $O_n, \ll_n, \asymp_n$.

\section{Main results}

\subsection{The monic case}

We begin with the following result which describes the size of $D_n(H)$:

 \begin{theorem}  \label{thm:monic}
 	Let $\ell$ be the smallest prime divisor of a composite number $n \geq 6$. Then, 
 	$$\#D_n(H) \asymp_n H^{\frac{n}{\ell}-1}.$$
 \end{theorem}

Here, we compare Theorem \ref{thm:monic} with related results over finite fields of von zur Gathen \cite{Gath15}. 
Let $n,\ell$ be as above. From \cite[Main Theorem (ii)]{Gath15}, we know that the number of decomposable monic origin-preserving
polynomials of degree $n$ over a finite field $\F_p$ (where $p$ is a prime number) is $\asymp p^{\ell + n/\ell -2}$. 
Since $\ell+n/\ell-2 \ge n/\ell $, the growth rate is  larger than that in Theorem \ref{thm:monic} for $H=p$. 
Thus, these two settings behave quite differently. 
This also suggests that there are much more indecomposable polynomials over $\Z$ which are decomposable modulo $p$ 
than  decomposable polynomials over $\Z$. 

In addition, Widmer \cite[Theorem 1.1]{Widmer} 
(see the discussions there, or see \cite[Equation (5)]{Bhargava}) 
gave an essentially optimal upper bound $O(H^{\frac{n}{2} +2})$ for 
the number of irreducible monic polynomials $f \in \Z[x]$ of degree $n$ and height at most $H$ 
and corresponding to number fields $\Q[x]/(f)$ having a non-trivial subfield. 
Clearly, typical examples of this kind of polynomials are irreducible decomposable polynomials. 
Note that Theorem~\ref{thm:monic} implies that the number of monic decomposable polynomials in $\Z[x]$ 
of degree $n$ and height at most $H$ is $O_n (H^{\frac{n}{\ell}})$. 
So, these two estimates suggest that irreducible decomposable polynomials occupy a small proportion
of those polynomials essentially counted in \cite[Theorem 1.1]{Widmer}. 

When $n$ is even, namely, $\ell=2$ in Theorem~\ref{thm:monic}, we obtain more explicit asymptotic results for $D_n(H)$.
Firstly, we give an asymptotic formula for $n=4$ (the case $n=4$ is missing in Theorem~\ref{thm:monic}):

\begin{theorem}  \label{thm:monicasy0}
	For $n=4$ we have 
	\begin{equation*}\label{pi1}
		\#D_4(H)=2H\log H+O(H).
	\end{equation*}
\end{theorem}

For $n=6$ we will prove the following explicit asymptotic formula.  
(Throughout, $\zeta(\cdot)$ is the Riemann zeta function.)

\begin{theorem}  \label{thm:monicasy00}
	For $n=6$ we have
	\begin{equation*}\label{pi3}
		\#D_6(H)= \Big(8\zeta(3)-\frac{5}{8}\Big)H^{2}+O(H^{\frac{3}{2}}). 
	\end{equation*}
\end{theorem}

Finally, for other even $n \geq 8$ the following asymptotic formula is true:

\begin{theorem}  \label{thm:monicasy}
	For each even $n \geq 8$ there is a constant $\kappa_n>0$ such that 
	\begin{equation*}\label{pi2}
		\#D_n(H)=\kappa_n H^{\frac{n}{2}-1} + O_n(H^{\frac{n}{2}-2}).
	\end{equation*}    
\end{theorem}

We remark that the constant $\kappa_n$ for even $n \geq 8$ is 
of the form $2^{\frac{n}{2}} \zeta\big(\frac{n(n-2)}{8}\big)+r_n$ with
some rational number $r_n$ (see \eqref{eq:kappa}). 

It seems very likely that Theorem~\ref{thm:monicasy} holds for all composite $n \geq 8$ with exponent $n/\ell-1$, where $\ell$ is the smallest prime divisor of $n$. This, if proved, combined with Theorem~\ref{thm:monicasy00} would give a full asymptotic version of Theorem~\ref{thm:monic}
for each composite number $n \geq 6$.  

In order to prove Theorem~\ref{thm:monic}, we need to make some more detailed studies. 
For any composite number $n$ with a proper divisor $d\ge 2$, let $D_{n,d}(H)$ be the set of polynomials $f\in D_n(H)$  
such that $f=g \circ h$ for some monic origin-preserving polynomials $g,h\in \Z[x]$ with $\deg g = d$ and $\deg h =n/d$. 
Clearly, 
\begin{equation}  \label{eq:Dn}
D_n(H) = \bigcup_{d \mid n, \, 2 \leq d <n} D_{n,d}(H).
\end{equation}

With this notation we will prove the following:

\begin{theorem}  \label{thm:monic-nd}
	Let $n \geq 4$ be a composite number with a proper divisor $d \ge 2$. 
	Write $n = dm$. 
	Then, we have 
	\begin{equation}\label{gui1}
	\#D_{n,d}(H) \asymp_n H^{d-1}
	\end{equation}
	 if $d(d-1) > 2(m-1)$,
		\begin{equation}\label{gui2}
		\#D_{n,d}(H) \asymp_n H^{d-1}\log H
			\end{equation}
		  if $d(d-1) = 2(m-1)$,
		and 
		\begin{equation}\label{gui3}
		\#D_{n,d}(H) \asymp_n H^{\frac{d-1}{2}+\frac{m-1}{d}}
			\end{equation}
		 if $d(d-1) < 2(m-1)$.
\end{theorem}

Note that for $d \ge m \ge 2$ we have either $d(d-1) > 2(m-1)$ or $d=m=2$.  In the case when $2 \leq d < m$, we have $$\frac{d-1}{2} + \frac{m-1}{d} < m-1.$$ 
Thus, combining \eqref{eq:Dn} with Theorem \ref{thm:monic-nd}, we 
immediately deduce Theorem~\ref{thm:monic}.

Actually, Theorem \ref{thm:monic-nd} tells us more information. 
By Theorem \ref{thm:monic-nd}, we see that the main contribution to 
$D_n(H)$ comes from the set $D_{n,n/\ell}(H)$. 

We denote by $I_n(H)$  the set of polynomials $f\in D_n(H)$ 
such that $f=g \circ h$ for some \textit{indecomposable} monic origin-preserving polynomials $g,h\in \Z[x]$ with $\deg g =n/\ell$ and $\deg h =\ell$, 
where $\ell$ is the smallest prime divisor of $n$. 
In particular, if $n=\ell^2$, we have $I_n(H) = D_{n,\ell}(H)=D_n(H)$, where one should note that $\ell$ is a prime.  

\begin{theorem}  \label{thm:monic-R}		
Let $\ell$ be the smallest prime divisor of a composite number $n$. Then,  we have 
\begin{equation*}   
\#D_n(H) = \#I_n(H) + 
\left\{
\begin{array}{rl}
O(H^{\frac{3}{2}}) & \textrm{if $n=6$}, \\
O_n(H^{\frac{n}{\ell}-2})  & \text{otherwise}. 
\end{array} \right.
\end{equation*}
\end{theorem}

From Theorems \ref{thm:monic} and \ref{thm:monic-R}, we know that 
 the main contribution to $D_n(H)$  comes from the set $I_n(H)$.

\subsection{The non-monic case}

The following result describes the size of $D_n^*(H)$:

\begin{theorem}  \label{thm:non-monic}
	Let $\ell$ be the smallest prime factor of a composite number $n \geq 4$. Then, 
	\begin{equation*}
		\#D_n^*(H) \asymp_n  H^{\frac{n}{\ell}}. 
	\end{equation*}
\end{theorem}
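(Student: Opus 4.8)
The plan is to read off the two-sided estimate directly from the per-pair bounds of Theorem \ref{thm:non-monic-mn}, combined with the elementary inequality \eqref{eq:Dd*}. The guiding principle is that among all factorizations $d=mn$ with $m,n\ge 2$, the single pair $(m,n)=(d/\ell,\ell)$ should dominate, and it alone realizes the exponent $d/\ell+1$; every other factorization contributes a strictly smaller power of $H$ (up to logarithmic factors).

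For the lower bound I would isolate this distinguished pair. Since $\ell$ is the smallest prime factor of $d$, every factor $n\ge 2$ of $d$ satisfies $n\ge\ell$; in particular for $n=\ell$ we have $m=d/\ell\ge\ell=n$, so $m\ge n$ and hence $m(m+1)\ge 2(n+1)$. Theorem \ref{thm:non-monic-mn} then gives $D_d^*(d/\ell,\ell;H)\asymp H^{d/\ell+1}$. As every polynomial counted by $D_d^*(d/\ell,\ell;H)$ is a decomposable integer polynomial of degree $d$ with height at most $H$, we obtain
\[
D_d^*(H)\ge D_d^*(d/\ell,\ell;H)\gg H^{d/\ell+1}.
\]

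For the upper bound I would start from \eqref{eq:Dd*} and bound each summand by Theorem \ref{thm:non-monic-mn}, checking in each regime that the exponent never exceeds $d/\ell+1$, with strict inequality unless $(m,n)=(d/\ell,\ell)$. Split the pairs according to whether $m\ge n$ or $m<n$. If $m\ge n$, then $m(m+1)\ge 2(n+1)$ and $D_d^*(m,n;H)\asymp H^{m+1}$; since $n\ge\ell$ forces $m\le d/\ell$, this contributes $O(H^{d/\ell+1})$, with equality only for the distinguished pair. If $m<n$, then $m^2<d$, so $m<\sqrt{d}\le d/\ell$ (using $\ell\le\sqrt{d}$, valid for composite $d$); thus in the regimes yielding exponent $m+1$ — including the borderline case $m(m+1)=2n$ with its extra $\log H$ — one has $m+1\le d/\ell$, which is $O(H^{d/\ell+1})$. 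In the remaining regime $m(m+1)\le 2(n-1)$ the exponent is $\tfrac{m+1}{2}+\tfrac{n}{m}$; writing $n=d/m$ and using $m<\sqrt{d}\le d/\ell$ together with $m\ge\ell$ gives
\[
\frac{m+1}{2}+\frac{d}{m^2}<\frac{d/\ell+1}{2}+\frac{d}{2\ell}=\frac{d}{\ell}+\frac12<\frac{d}{\ell}+1 .
\]
Since there are only finitely many factorizations of $d$ and all implied constants depend only on $d$, summing yields $D_d^*(H)\ll H^{d/\ell+1}$, and together with the lower bound this gives $D_d^*(H)\asymp H^{d/\ell+1}$.

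The only slightly delicate part is the final exponent comparison in the regime $m(m+1)\le 2(n-1)$: one must verify $\tfrac{m+1}{2}+\tfrac{d}{m^2}<\tfrac{d}{\ell}+1$ uniformly over the admissible factors $m$. The clean route is to bound the two terms separately, via $m<\sqrt{d}\le d/\ell$ for the first and $m^2\ge\ell^2\ge 2\ell$ for the second, so that no factorization other than $(d/\ell,\ell)$ approaches the extremal exponent. I expect no genuine obstacle beyond this bookkeeping, since all of the substantive analytic content already resides in Theorem \ref{thm:non-monic-mn}.
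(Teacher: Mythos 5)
Your proposal is correct and follows essentially the same route as the paper: the theorem is read off by combining \eqref{eq:Dd*} with the per-pair bounds of Theorem \ref{thm:non-monic-mn}, noting that $(m,n)=(d/\ell,\ell)$ satisfies $m\ge n$ and hence $m(m+1)\ge 2(n+1)$ (giving the lower bound $\gg H^{d/\ell+1}$), while every other pair's exponent stays below $d/\ell+1$. Your exponent bookkeeping in the case $m<n$ (via $m<\sqrt{d}\le d/\ell$ and $m^2\ge 2\ell$) is a harmless variant of the paper's observation that $\tfrac{m+1}{2}+\tfrac{n}{m}<n+1\le d/\ell+1$.
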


Note that the exponent for $H$ in Theorem~\ref{thm:non-monic}
is greater by one than that in Theorem~\ref{thm:monic}. This is a natural phenomenon  and occurs in similar situations (monic vs. non-monic) in 
\cite{du1, du2, du3, du4} and also in \cite{du10,du11}. 

As in the monic case, for any composite number $n$ with  a proper divisor $d  \ge 2$, 
let $D_{n,d}^*(H)$ be the set of polynomials $f\in D_n^*(H)$ of degree $n$ 
such that $f=g \circ h$ for some origin-preserving polynomials $g,h\in \Z[x]$ with positive leading coefficient and with content 1 
and also $\deg g =d$ and $\deg h =n/d$. 
Clearly, 
\begin{equation}  \label{eq:Dn*}
D_n^*(H) = \bigcup_{d \mid n, \, 2 \leq d <n} D_{n,d}^*(H). 
\end{equation}

Here is an analogue of Theorem~\ref{thm:monic-nd} in the non-monic case. 

\begin{theorem}  \label{thm:non-monic-nd}
		Let $n \geq 4$ be a composite number with a proper divisor $d \ge 2$. 
	Write $n = dm$. 
	Then, we have 
	\begin{equation}\label{gui11}
		\#D_{n,d}^*(H) \asymp_n H^{d}
	\end{equation}
	if $d(d+1) > 2m$,
	\begin{equation}\label{gui21}
		\#D_{n,d}^*(H) \asymp_n H^{d}\log H
	\end{equation}
	if $d(d+1) = 2m$,
	and 
	\begin{equation}\label{gui31}
		\#D_{n,d}^*(H) \asymp_n H^{\frac{d-1}{2}+\frac{m}{d}}
	\end{equation}
	if $d(d+1) < 2m$.
\end{theorem}

Notice that for $d \ge m \ge 2$  we certainly have  $d(d+1) > 2m$.  
When $2 \le d < m$, we have $$\frac{d-1}{2} + \frac{m}{d} < m.$$ 
Thus, combining \eqref{eq:Dn*} with Theorem \ref{thm:non-monic-nd}, we derive Theorem~\ref{thm:non-monic}.

Similarly, Theorem \ref{thm:non-monic-nd} also tells us more information. 
By Theorem \ref{thm:non-monic-nd}, the main contribution to 
$D_n^*(H)$ comes from the set $D_{n,n/\ell}^*(H)$.
Now, we denote by $I_n^*(H)$  the set of polynomials $f\in D_n^*(H)$ 
such that $f=g \circ h$ for some \textit{indecomposable} origin-preserving polynomials $g,h\in \Z[x]$ with positive leading coefficient and with content 1 
and also $\deg g =n/\ell$ and $\deg h =\ell$. 
In particular, if $n=\ell^2$, we have $I_n^*(H) = D_{n, \ell}^*(H) = D_n^*(H)$. 

\begin{theorem}  \label{thm:non-monic-R}
Let $\ell$ be the smallest prime factor of a composite number $n$. 
Then, we have 
\begin{equation*}   
\#D_n^*(H) = \#I_n^*(H) + 
\left\{
\begin{array}{rl}
O(H^2 \log H) & \textrm{if $n=6$}, \\
O_n(H^{\frac{n}{\ell} - \frac{3}{2}})  & \text{otherwise}. 
\end{array} \right.
\end{equation*}
\end{theorem}

According to Theorems \ref{thm:non-monic} and \ref{thm:non-monic-R}, we see that 
 the main contribution to $D_n^*(H)$ comes from $I_n^*(H)$.
 
In all what follows we first gather some basic auxiliary results in Section~\ref{sec:pre} 
and then prove the main results in Sections~\ref{sec:monic} and \ref{sec:non-monic}.

\section{Preliminaries} 
\label{sec:pre}

\subsection{Some basic results}
For a complex polynomial of degree $n$ 
$$
f(x)=a_nx^n + \cdots + a_1x + a_0 = a_n \prod_{i=1}^{n}(x-\alpha_i) \in \C[x],
$$
 its \textit{height} is defined by 
$$
\rH(f) = \max \{|a_0|,|a_1|, \ldots, |a_n|\}, 
$$
and its \textit{Mahler measure} by 
$$
\rM(f) = |a_n| \prod_{i=1}^{n} \max\{1, |\alpha_i|\}. 
$$
For each $f \in \C[x]$ of degree $n$, these quantities are related by the following well-known inequality
\begin{equation}\label{eq:Mahler}
\rH(f) 2^{-n} \leq \rM(f) \leq \rH(f) \sqrt{n+1}; 
\end{equation}
for instance, see \cite[(3.12)]{Waldschmidt2000}.   
In addition, the \textit{length} of $f$ is defined by   
$$
\rL(f) = |a_0|+ |a_1|+ \cdots + |a_n|.  
$$
Clearly, we have $$\rH(f) \le \rL(f).$$ 
Besides, for two non-zero polynomials $g,h\in \C[X]$ with $g = c_d x^d + \cdots + c_1x + c_0$, it is easy to see that 
\begin{equation}  \label{eq:length}
\rL(g \circ h) \le |c_d| \rL(h)^d + \cdots + |c_1| \rL(h) + |c_0|.
\end{equation}

Next, if $f \in \Z[x]$ is non-constant, as usual, we define the \textit{content} of $f$, denoted by $\rC(f)$, to be 
the greatest common divisor of its coefficients, that is, 
$$
\rC(f) = \gcd(a_0, a_1, \ldots, a_n). 
$$

Essentially, the following lemma is a relation between the heights of polynomials in a polynomial decomposition $f=g\circ h$. 

\begin{lemma} \label{lem:height}
	Let $f = g \circ h \in \C[x]$ be of degree $n$, where 
	$$g(x)=a_dx^d+\dots+a_1x+a_0 \in \C[x], \>\> a_d \ne 0,$$ is of degree $d$ and  
	$h(x) \in \C[x]$ is of degree $m=n/d$ satisfying $h(0)=0$. 
	Then, 
	\begin{equation}\label{viens}
	|a_j| \rH(h)^j \leq 2^n {d \choose j} \rM(f)
	\end{equation}
	for each $j=0,1,\dots,d$. 
	
	In particular, for $a_d = 1$ and $d=n/2 \geq 2$ we have
	\begin{equation}\label{viens00}
		\rH(h) < 6\rH(f)^{\frac{1}{d}}. 
	\end{equation}
\end{lemma}

\begin{proof}
	Assume that
	$$
	g(x) = a_d \prod_{i=1}^{d} (x-\alpha_i).
	$$
	Then, 
	$$
	f(x) = g \circ h(x) =a_d \prod_{i=1}^{d} (h(x)-\alpha_i),
	$$
	and hence, by the multiplicativity of Mahler's measure,
	$$\rM(f)=|a_d| \prod_{i=1}^d \rM(h(x)-\alpha_i).$$ Combining this with
	$$\rM(h(x)-\alpha_i) \geq \frac{\rH(h(x)-\alpha_i)}{2^m} = \frac{\max\{\rH(h),|\alpha_i|\}}{2^m},$$
	where the inequality holds by \eqref{eq:Mahler} and the equality by $h(0)=0$,
	we derive that 
	$$|a_d|\prod_{i=1}^d \max\{\rH(h),|\alpha_i|\} \leq 2^{md}\rM(f)=2^n \rM(f).$$ 
	
	Therefore, to deduce \eqref{viens} it suffices to show that
	\begin{equation}\label{viens2}
	|a_j| \rH(h)^j \leq |a_d| {d \choose j} \prod_{i=1}^d \max\{\rH(h),|\alpha_i|\}
	\end{equation}
for $j=0,1,\dots,d$.

	 Without loss of generality we assume that $|\alpha_1| \geq |\alpha_2| \geq \dots \geq |\alpha_d|$. Then,
	$$
	\frac{|a_j|}{|a_d|}=\Big|\sum_{1 \leq i_1<\dots<i_{d-j} \leq d}\alpha_{i_1}\dots\alpha_{i_{d-j}}\Big| \leq {d \choose d-j} \prod_{i=1}^{d-j} |\alpha_i|={d \choose j} \prod_{i=1}^{d-j} |\alpha_i|,
	$$
	where the last product is empty for $j=d$. 
	Hence,
	$$
	\frac{|a_j|\rH(h)^j}{|a_d|} \leq {d \choose j} \rH(h)^j \prod_{i=1}^{d-j} |\alpha_i| \leq {d \choose j} \prod_{i=1}^d \max\{\rH(h),|\alpha_i|\},
	$$
	which implies \eqref{viens2} and then \eqref{viens}.
	
	Finally, \eqref{viens00} follows from \eqref{viens} with $j=d$ combined with
	the upper bound in \eqref{eq:Mahler} and the inequality $(2d+1)^{\frac{1}{2d}}<1.5$ for $d \geq 2$. 
	\end{proof}

We also need the next simple lemma. 

\begin{lemma}  \label{lem:factor}
Let $\ell$ be the smallest prime divisor of a composite number $n$ with $n \ne 6$.  
Then, for any divisor $d \ge 2$ of $n$ with $d \ne n/\ell$, we have $d \le n/\ell-2$. 
\end{lemma}

\begin{proof}
Since $\ell$ is the smallest prime divisor of $n$, we must have $d \le n/\ell$. 
For a contradiction we assume that $d = n/\ell-1$. 
Then, we see that 
$n/\ell$ and $n/\ell-1$ are two coprime factors of $n$, and so $(n/\ell)(n/\ell - 1) \le n$. 
This implies $n/\ell-1\le \ell$, and hence $n \le \ell(\ell + 1)$. 
Now, due to the choice of $\ell$ and $n \ge \ell^2$, we must have $n = \ell(\ell + 1)$. But then $\ell+1$ must be a prime number, so that 
$\ell=2, d=2, n=6$ is the only possibility, which contradicts with the assumption $n \ne 6$. 
\end{proof}

Since standard results stating that with positive probability $k$ random integers are relatively prime (see, e.g., \cite{nym}) 
are not sufficient for our purpose, we also record the following:

\begin{lemma}  \label{lem:factor11}
Given integers $k \geq 2$ and $X_1,\dots,X_k \geq 1$, there are at least
$0.355 X_1\cdots X_k$  collections of positive integers $a_1,\dots,a_k$ satisfying
$1 \leq a_i \leq X_i$ for $i=1,\dots,k$ and 
$$\gcd(a_1,\dots,a_k)=1.$$   
\end{lemma}

\begin{proof}
Suppose $a_1,\dots,a_k$ is a collection of positive integers satisfying $1 \leq a_i \leq X_i$ for $i=1,\dots,k$ and $\gcd(a_1,\dots,a_k)>1$.
Then, there is a prime number $p \leq \min_{1 \leq i \leq k} X_i$ such that $p|a_i$
for $i=1,\dots,k$. The number of such collections (corresponding to this $p$) is clearly at most $(X_1/p) \dots (X_k/p)=p^{-k}X_1\dots X_k$. The number of all such (unsuitable) collections is thus less than or equal to 
$$X_1\dots X_k\sum_{ p \leq \min_{i}X_i} p^{-k} < X_1\dots X_k \sum_{j=2}^{\infty} j^{-k} = (\zeta(k)-1) X_1\dots X_k.$$ Consequently, the number of suitable collections (those with relatively prime integers $a_1,\dots,a_k$ in the range $1 \leq a_i \leq X_i$ for $i=1,\dots,k$) is at least
$$( 2-\zeta(k)) X_1 \dots X_k \geq (2-\zeta(2))X_1\dots X_k > 0.355 X_1 \dots X_k,$$
as claimed.
\end{proof}

\subsection{Decomposability of polynomials} 

We start with a simple result about non-monic polynomials in $\Z[x]$. 

\begin{lemma}  \label{lem:content}
For any origin-preserving polynomial $f \in \Z[x]$, 
$f$ is decomposable if and only if $f / \rC(f)$ is decomposable. 
\end{lemma}

\begin{proof}
Assume $\rC(f) > 1$. 
The sufficiency part is trivial. 
We only need to prove the necessity part. 
For this, it suffices to show that if $f$ is decomposable, then for a prime divisor $\ell$ of $\rC(f)$, 
$f/\ell$ is decomposable. 

Assume $f= g \circ h$ in $\Z[x]$ with origin-preserving polynomials $g,h$. 
We claim that either $\ell \mid \rC(g)$ or $\ell \mid \rC(h)$. 
In fact, by contradiction suppose that $\ell \nmid \rC(g)\rC(h)$.
Then, let $i,j$ be the largest integers such that $\ell$ does not divide the coefficient $a_i$ for $x^i$ in $g$ and  
the coefficient $b_j$ for $x^j$ in $h$ respectively.  
Then, looking at the coefficient for $x^{ij}$ in $g \circ h = f$ which must be divisible by $\ell$ we get $\ell \mid a_i b_j^i$. This 
contradicts with $\ell \nmid a_i b_j$. 

Now, if $\ell \mid \rC(g)$, we have $f/\ell = (g/\ell) \circ h$. 
If $\ell \mid \rC(h)$, write $g = a_d x^d + \cdots + a_1 x$ and define $g^* = a_d \ell^{d-1} x^d + \cdots + a_2 \ell x^2 +  a_1 x$, 
then we have $f/\ell = g^* \circ (h/\ell)$. 
This completes the proof. 
\end{proof}

Let $F$ be a field of characteristic $p$. 
For any integer $d \ge 1$, define the set 
$$
P_d(F) = \{f \in F[x]:\, \deg f = d, \textrm{$f$ monic and origin-preserving} \}. 
$$

In general the decomposition into monic origin-preserving polynomials is not unique, 
but it can happen once the degrees of polynomials are fixed. 
For this, we reproduce the following well-known result; see \cite[Corollary 2.3 (ii)]{Gath90} (see, also, \cite[Fact 3.1(i)]{Gath13}). 

\begin{lemma}\label{lem:injective}
For any integers $d,m \ge 1$,  if $p$ does not divide $d$ (automatically including the case $p=0$), 
then the composition map from $P_d(F) \times P_m(F)$ to $P_{dm}(F)$, sending $(g,h)$ to $g \circ h$, is injective. 
\end{lemma}

The following lemma is an analogue of Lemma~\ref{lem:injective} about  non-monic polynomials in $\Z[x]$. 
It shows that we count the ``right" polynomials in the set $D_n^*(H)$. 

For any integer $d \ge 1$, define the set 
\begin{align*}
N_d = \{f \in \Z[x]:\, & \deg f = d, \rC(f)=1, \textrm{$f$ is origin-preserving}, \\
&  \textrm{$f$ has positive leading coefficient} \}. 
\end{align*}

\begin{lemma}  \label{lem:injective-Z}
For any integers $d,m \ge 1$,  
the composition map from $N_d \times N_m$ to $N_{dm}$, sending $(g,h)$ to $g \circ h$, is injective. 
\end{lemma}

\begin{proof}
First, it is easy to see that the composition map is well-defined. 

Now, suppose that there exist polynomials $g_1, g_2 \in N_d$ and $h_1, h_2 \in N_m$ such that $g_1 \circ h_1 = g_2 \circ h_2$. 
Write $f = g_1 \circ h_1$, and let $a$ be the leading coefficient of $f$. 
Then, $f^* = f/a$ is a monic origin-preserving polynomial in $\Q[x]$. 

Let $a_1, a_2$ be the leading coefficients of $h_1, h_2$ respectively. 
Define $g_1^* = a^{-1}g_1(a_1 x), h_1^* = a_1^{-1} h_1$ and $g_2^* = a^{-1}g_2(a_2x), h_2^* = a_2^{-1} h_2$. 
Then, we have 
$$
f^* = g_1^* \circ h_1^* = g_2^* \circ h_2^*.
$$
Since $f^*, h_1^*, h_2^*$ are monic origin-preserving polynomials in $\Q[x]$, so are the polynomials $g_1^*, g_2^*$. 
Noticing $\deg g_1^* = \deg g_2^* = d$, by Lemma~\ref{lem:injective}, we must have 
$$
g_1^* = g_2^*, \qquad  h_1^* = h_2^*.
$$
Then, we must have $a_2 h_1 = a_1 h_2$. This, combined with $\rC(h_1) = \rC(h_2) = 1$, implies $a_1 = a_2$, 
and so $h_1 = h_2$ which yields $g_1 = g_2$. 
This completes the proof. 
\end{proof}

\section{Proofs for the monic case} 
\label{sec:monic}

\subsection{Proof of Theorem \ref{thm:monic-nd}}

Recall that $n =dm$, where $d,m \geq 2$. 
Note that for any polynomial $f \in D_{n,d}(H)$, 
we have $f=g\circ h$ for some monic origin-preserving polynomials $g$ and $h$ with $\deg g=d$ and $\deg h=m$.  
By  \eqref{viens} in Lemma~\ref{lem:height} and \eqref{eq:Mahler},  we have 
\begin{equation}   \label{eq:Hh}
\rH(h) \le c_1 \rH(f)^{\frac{1}{d}} \le c_1 H^{\frac{1}{d}}, 
\end{equation}
where $c_1 > 0$ is a constant depending only on $n$. 
Set $b=\rH(h)$. Then, at least one coefficient of $h$ is equal to $\pm b$. 
For each $1\le b \le c_1 H^{1/d}$ (see \eqref{eq:Hh}), 
since $h$ is monic and origin-preserving,  $h$ has at most $2(2b+1)^{m-2}$ possibilities. 
In addition, by Lemma~\ref{lem:height}, there exists a constant $c_2 > 0$ depending only on $n$ such that 
the absolute value of the coefficient of each monomial $x^j$ in $g$, $1 \le j \le d-1$, is at most 
$$
c_2 \rH(f)/\rH(h)^{j} = c_2 \rH(f)/b^{j} \le c_2 H /b.
$$ 
By estimating the possibilities of $g$ and $h$, we deduce that 
\begin{equation*}
\begin{split}
\#D_{n,d}(H) & \le \sum_{1 \leq b \leq c_1 H^{1/d}} 2(2b+1)^{m-2} \cdot \frac{(c_2H)^{d-1}}{b^{1+2+\cdots + (d-1)}} \\
 & \ll_n \sum_{1 \leq b \leq c_1 H^{1/d}} b^{m-2}\cdot \frac{H^{d-1}}{b^{1+2+\cdots + (d-1)}} \\
 & \ll_n H^{d-1}\sum_{1 \leq b \leq c_1 H^{1/d}} \frac{1}{b^{\frac{d(d-1)}{2}+2-m}} \\ 
 & \ll_n \left\{
\begin{array}{rl}
H^{d-1} & \textrm{if $d(d-1) > 2(m-1)$}, \\
H^{d-1} \log H  & \text{if $d(d-1) = 2(m-1)$}, \\ 
 H^{\frac{d-1}{2}+\frac{m-1}{d}} & \text{if $d(d-1) < 2(m-1)$}. 
\end{array} \right.
 \end{split}
\end{equation*}  
This gives the upper bounds in all three formulas.

To get the lower bounds observe that
for polynomials of the form 
$$
(x^{d} + a_{d-1} x^{d-1} + \cdots + a_1 x ) \circ x^m  = x^{dm} + a_{d-1} x^{(d-1)m} + \cdots + a_1 x^m, 
$$
where $|a_i| \le H$ for each $1 \le i \le d-1$, we obtain the lower bound 
$$
	\#D_{n,d}(H)  \ge (2H)^{d-1}. 
$$
This completes the proof of \eqref{gui1}. 

To derive the lower bound in \eqref{gui3} we consider monic origin-preserving $h \in \Z[x]$ of degree $m$ satisfying $\rH(h) \leq H^{1/d}/(2m)$ 
and monic origin-preserving $g(x)=x^d+a_{d-1}x^{d-1}+\dots+a_1x \in \Z[x]$ with $|a_i| \leq H^{(d-i)/d}$. Then, as $\rH(f) \le \rL(f)  = \rL(g \circ h)$
 and $\rL(h) \le H^{1/d}/2$ (for $H$ large enough), using \eqref{eq:length} we obtain 
\begin{align*}
\rH(f)  \leq  \Big(\frac{H^{\frac{1}{d}}}{2}\Big)^d+\sum_{i=1}^{d-1} H^{\frac{d-i}{d}} \Big(\frac{H^{\frac{1}{d}}}{2}\Big)^i =H \sum_{i=1}^d \frac{1}{2^i}<H.
\end{align*}
By Lemma~\ref{lem:injective}, distinct pairs $(g,h)$ give distinct monic $f$ satisfying $\rH(f) < H$. For $H$ large enough there are at least  
$$
\Big(\frac{H^{\frac{1}{d}}}{2m}\Big)^{m-1} \prod_{i=1}^{d-1} H^{\frac{d-i}{d}} = \frac{1}{(2m)^{m-1}} H^{\frac{m-1}{d}+\frac{d-1}{2}}
$$ 
of such pairs, which proves the lower bound in \eqref{gui3}. 

Finally, to derive the lower bound in \eqref{gui2} under assumption 
$d(d-1)=2(m-1)$
we fix an integer $B$ satisfying 
\begin{equation}\label{gui4}
1 \leq B \leq H^{\frac{1}{2d}}
\end{equation}
 and consider monic origin-preserving $h$
of height exactly $B$. There are at least $B^{m-2}$ of them. To each of them we assign the monic origin-preserving polynomial $g(x)=x^d+a_{d-1}x^{d-1}+\dots+a_1x \in \Z[x]$ with $|a_i| \leq H/(n^dB^i)$.
Then, as $\rH(f) \le \rL(f)$  and  $\rL(h) \le mB$, using \eqref{eq:length} we obtain
$$
\rH(f) \leq  (mB)^d+\sum_{i=1}^{d-1} \frac{H}{n^dB^i} (mB)^i \leq m^d \sqrt{H}+\frac{(d-1)m^{d-1}H}{n^d}<H
$$
for $H$ large enough. As above, by Lemma~\ref{lem:injective}, distinct pairs $(g,h)$ give distinct monic origin-preserving $f$ satisfying $\rH(f) < H$. 
Since $$m-2-\frac{d(d-1)}{2}=-1,$$ for each sufficiently large $H$ there are at least  
$$B^{m-2} \prod_{i=1}^{d-1} \frac{H}{n^dB^i} = \frac{H^{d-1}}{n^{d(d-1)}B}$$ of such pairs. 
Summing over $B$ as in the range \eqref{gui4} gives the logarithmic factor, which yields the lower bound in \eqref{gui2}. This completes the proof of the theorem.

\subsection{Proof of Theorem \ref{thm:monic-R}}

For $n=6$, by Theorem \ref{thm:monic-nd} we have 
$$
\#D_{6,2}(H) \ll H^{\frac{3}{2}}. 
$$
Since $D_6(H) = D_{6,3}(H) \cup D_{6,2}(H)$ and $I_6(H)=D_{6,3}(H)$, we indeed have 
$$
\#D_6(H) = \#I_6(H) + O(H^{\frac{3}{2}}). 
$$

For $n = \ell^2$ it is clear that $D_n(H) = D_{n,\ell}(H) = I_n(H)$. 

In all what follows, we suppose that $n \ne 6$ and $n \ne \ell^2$. 
Then, due to the choice of $\ell$, we must have  $n/\ell \ge 4$. 
According to Theorem \ref{thm:monic-nd}, it suffices to consider three cases regarding the pairs $(d,m)$ of integers $d,m\ge 2$ with $n=dm$. 

The first case is $d(d-1) > 2(m-1)$. 
Given such a pair $(d,m)$, by Theorem~\ref{thm:monic-nd},  we get 
$$
\#D_{n,d}(H) \ll_n H^{d-1}. 
$$ 
By Lemma~\ref{lem:factor}, if $2 \le d \ne n/\ell$, we have $d \le n/\ell - 2$, and so
\begin{equation}  \label{eq:R1}
\#D_{n,d}(H) \ll_n H^{\frac{n}{\ell}-3}. 
\end{equation}

The second case is $d(d-1)=2(m-1)$. 
Clearly, $d < n/\ell$. 
Applying Theorem~\ref{thm:monic-nd} and Lemma~\ref{lem:factor}, we obtain 
\begin{equation}  \label{eq:R2}
\#D_{n,d}(H) \ll_n H^{\frac{n}{\ell}-3}\log H. 
\end{equation}

The third case is $d(d-1) < 2(m-1)$. 
By Theorem \ref{thm:monic-nd}, we have
$$
\#D_{n,d}(H)  \ll_n H^{\frac{d-1}{2}+\frac{m-1}{d}}. 
$$ 
If $d=2$, then $\ell=2, m=n/\ell \ge 4$. Hence, 
$$
\frac{d-1}{2}+\frac{m-1}{d} = \frac{m}{2}  = \frac{n}{2\ell}  \le \frac{n}{\ell} - 2, 
$$
which implies that 
\begin{equation}   \label{eq:R3-1}
\#D_{n,d}(H) \ll_n  H^{\frac{n}{\ell}-2}. 
\end{equation}  
Next, assume that $d \ge 3$.
Then, in view of $d(d-1) < 2(m-1)$ we have $d(d-1) \le 2(m-1) - 2 = 2(m-2)$ (because $d(d-1)$ is even). So, noticing $n/\ell \ge 4$, we obtain 
\begin{align*}
\frac{d-1}{2}+\frac{m-1}{d} & = \frac{d(d-1)+ 2(m-1)}{2d} \le \frac{2(m-2) + 2(m-1)}{2d} \\
& = \frac{2m-3}{d} \le \frac{2m}{3} - 1 \le  \frac{2n}{3\ell} - 1 \le \frac{n}{\ell} - \frac{7}{3}, 
\end{align*}
which implies that 
\begin{equation}  \label{eq:R3-2}
\#D_{n,d}(H) \ll_n H^{\frac{n}{\ell}-\frac{7}{3}}. 
\end{equation}

Hence, summarizing from \eqref{eq:R1}, \eqref{eq:R2}, \eqref{eq:R3-1} and \eqref{eq:R3-2}, we obtain 
\begin{equation}   \label{eq:R4}
\#D_n(H) = \#D_{n,n/\ell}(H) + O_n(H^{\frac{n}{\ell}-2}).  
\end{equation}

Finally, for a polynomial $f \in \Z[x]$ contributing to $D_{n,n/\ell}(H)$ with $f=g \circ h, \deg g = n/\ell$ and $\deg h=\ell$, 
the polynomial $h$ is certainly indecomposable (because it is of prime degree). 
Suppose that $g$ is decomposable. Then, $f$ also contributes to some $D_{n,d}(H)$ with $d < n/\ell$. 
However, in this case, according to \eqref{eq:R1}, \eqref{eq:R2}, \eqref{eq:R3-1} and \eqref{eq:R3-2},  we have 
$$
\#D_{n,d}(H) \ll_n H^{\frac{n}{\ell} - 2}. 
$$
This implies 
\begin{equation}  \label{eq:DdId}
\#D_{n,n/\ell}(H) = \#I_n(H) + O_n(H^{\frac{n}{\ell}-2}). 
\end{equation}
So, the desired result follows from \eqref{eq:R4} and \eqref{eq:DdId}.

\subsection{Proof of Theorem~\ref{thm:monicasy0}}

The only possibility to express $f \in D_4(H)$ in the form $f(x)=g(h(x))$
with monic origin-preserving $g,h \in \Z[x]$, both of degree greater than $1$, is when $g$ and $h$ are both quadratic. Set $g(x)=x^2+a_1x$ and $h(x)=x^2+bx$ with $a_1,b \in \Z$.
Then, 
$$
f(x)=(x^2+bx)^2+a_1(x^2+bx)=x^4+2bx^3+(b^2+a_1)x^2+a_1bx.
$$ 
Clearly, different pairs $(b,a_1) \in \Z^2$ produce different polynomials $f$ (see Lemma~\ref{lem:injective}), 
so it suffices to count those integer pairs that satisfy the inequalities
\begin{equation}\label{eee1}
 |b| \leq H/2, \>\> -H-b^2 \leq a_1 \leq H-b^2 \>\> \text{and} \>\> |a_1b| \leq H. 
 \end{equation}

 Fix $b \in \Z$, $|b| \leq H/2$. Set $V(b,H)$ for the number $a_1 \in \Z$ for which all the inequalities in \eqref{eee1} hold. 
It is clear that 
$V(0,H)=2H+1$
and $V(b,H)=V(-b,H)$. By \eqref{viens00}, for each $b \in \N$ satisfying \eqref{eee1} we must have $b < 6\sqrt{H}$. 
Hence, 	
\begin{equation}\label{vei1}
\#D_4(H)=2H+1+2\sum_{1 \leq b <6\sqrt{H}} V(b,H). 
\end{equation}

 Fix any integer $b$ in the range $1 \leq b<6\sqrt{H}$. Then, the last inequa\-li\-ty in \eqref{eee1} is equivalent to $-H/b \leq a_1 \leq H/b$. Since $-H-b^2<-H/b$ and $6\sqrt{H} \leq H/2$ for $H \geq 144$, the number $V(b,H)$ for $H \geq 144$ is equal to the number of integers $a_1$ satisfying
\begin{equation}\label{vei2}
-H/b \leq a_1 \leq \min\{H-b^2,H/b\}.
\end{equation}

In the range $\sqrt{H/2} < b < 6 \sqrt{H}$
there are only $O(\sqrt{H})$ suitable $b$ and for each of them at most
$O(\sqrt{H})$ suitable $a_1$ satisfying \eqref{eee1}. Hence, 
$$
\sum_{\sqrt{H/2} < b<6\sqrt{H}} V(b,H)=O(H). 
$$
Also, $V(1,H)=2H$ by \eqref{vei2}. 
Therefore, by \eqref{vei1}, we find that
$$\#D_4(H)=2\sum_{2 \leq b \leq \sqrt{H/2}} V(b,H)+O(H). 
$$

Note that for each $b$ satisfying $2 \leq b \leq \sqrt{H/2}$ we have
$H/b \leq H-b^2$ provided that $H$ is large enough. So, by \eqref{vei2},
we deduce that $V(b,H)=2\lfloor H/b\rfloor+1$ for each of those $b$.
Therefore, 
$$\sum_{2 \leq b \leq \sqrt{H/2}} V(b,H)=\sum_{2 \leq b \leq \sqrt{H/2}} (2\lfloor H/b\rfloor+1) = H\log H +O(H),$$
which implies the desired result.

\subsection{Proofs of Theorems~\ref{thm:monicasy00} and \ref{thm:monicasy}}

By \eqref{eq:Dn},  for $n=6$ the set $D_{6}(H)$ consists of the union of two sets
$D_{6,3}(H)$ and $D_{6,2}(H)$. Note that Theorem~\ref{thm:monic-nd} implies
\begin{equation}\label{boi0}
\#D_{6,2}(H)=O(H^{\frac{3}{2}}).
\end{equation}
 Later, we will consider the contribution
from $D_{6,3}(H)$. 

Similarly, by Theorem~\ref{thm:monic-nd}, for even $n \geq 8$ the main contribution comes from
$D_{n,d}(H)$ with $d=n/2 \geq 3$, while the contributions of other
$D_{n,d}(H)$ (with $2 \leq d < n/2$) are of the form 
\begin{equation}\label{boi1}
O_n(H^{\frac{n}{2}-2}), 
\end{equation}
which in fact follows from \eqref{eq:R1}, \eqref{eq:R2}, \eqref{eq:R3-1} and \eqref{eq:R3-2} 
(taking $\ell = 2$ there).

In all what follows we will evaluate $\#D_{n,d}(H)$ with $d=n/2$ for even $n \ge 6$. 
 
By Lemma~\ref{lem:injective}, each $f \in D_{n,d}(H)$ is expressed uniquely
as $f(x)=g(x^2+bx)$ for some $b \in \Z$ and some monic origin-preserving polynomial $g$ of degree $d$, so we need to evaluate 
the number of polynomials of the form
$$f(x)=(x^2+bx)^{d}+a_{d-1}(x^2+bx)^{d-1}+\dots+a_1(x^2+bx)=
x^{2d}+\sum_{j=1}^{2d-1} c_j x^j,$$
where the coefficient $c_j$ for $x^{j}$, $1 \leq j \leq 2d-1$, is 
\begin{equation}\label{coff1}
c_j=\sum_{j/2 \leq k \leq j} {k \choose 2k-j} b^{2k-j} a_k,
\end{equation}
where $a_d=1$ and $a_k =0$ for $k>d$. Fix $b \in \Z$. Let $V(b,H)$ be the number of vectors $(a_1,\dots,a_{d-1}) \in \Z^{d-1}$ for which $\rH(f) \leq H$.
This is equivalent to $|c_1|,\dots,|c_{2d-1}| \leq H$.  

For $b=0$ we have $f(x)=x^{2d}+a_{d-1}x^{2d-2}+\dots+a_1x^2$, and hence
\begin{equation}\label{coff101}
V(0,H)=(2H+1)^{d-1}.
\end{equation}
Also, by \eqref{coff1}, it is clear that 
$$V(b,H)=V(-b,H).$$ 

Assume now that $b \ge 1$.  Since $b=\rH(x^2+bx)$ and $n=2d$,
we have $b <6H^{1/d}$ by \eqref{viens00}.
Combining this with 
\eqref{coff101} we get
\begin{equation}\label{coff103}
	\#D_{n,d}(H)=(2H+1)^{d-1}+2\sum_{1 \leq b<6H^{1/d}} V(b,H).
\end{equation}

Next, we will split the sum in \eqref{coff103}
into three sums. The first sum is over the range $1 \leq b < B$, where
$B$ is a positive integer to be chosen later. The second sum is over the range
$B \leq b \leq (H/10)^{1/d}$, and the third one is over the range $(H/10)^{1/d}<b<6H^{1/d}$.

Note that, by \eqref{coff1}, the inequality $|c_1|=|ba_1| \leq H$ is equivalent to
$|a_1| \leq H/b$.  Further, from $c_2=a_1+b^2a_2$ we see that 
$|c_2| \leq H$ is equivalent to $|a_2+a_1/b^2| \leq H/b^2$. This implies 
$$|a_2| \leq H/b^2+H/b^3 \leq 2H/b^2.$$ 
Likewise,
for each $j$ in the range $1 \leq j \leq d-1$ from \eqref{coff1} we see that $|c_j| \leq H$
is equivalent to
\begin{equation}\label{coff104}
\Big|a_j+\sum_{j/2 \leq k \leq j-1} {k \choose 2k-j} b^{2k-2j} a_k\Big|\leq H/b^j.
\end{equation}
 Applying \eqref{coff104} step by step to $j=3$, etc. up to $j=d-1$, we derive the inequalities
\begin{equation}\label{coff2}
|a_j| \leq u_jH/b^j,
\end{equation}
where $j=1,\dots,d-1$ and $u_j$ depends on $j$ only. (Note that
\eqref{coff2} also follows from Lemma~\ref{lem:height}, but here we derive \eqref{coff2} possibly with worse constants using only $|c_j| \leq H$ for $j=1,\dots,d-1$.)

Similarly, for $d \leq j \leq 2d-1$ 
using \eqref{coff1} we can rewrite the inequality $|c_j| \leq H$ in the form
\begin{equation}\label{coff4}
	\Big|{d \choose 2d-j} b^{2d-j}+\sum_{j/2 \leq k \leq d-1} {k \choose 2k-j} b^{2k-j} a_k \Big| \leq H.
\end{equation}
In particular, for $j=2d-1$ we have $c_{2d-1}=bd$. For $H$ large enough this is less than
$H$ due to $b<6H^{1/d}$, so we can restrict $j$ to the range $d \leq j \leq 2d-2$.

Suppose first that $1 \leq b <B$, where $B$ is a fixed positive integer. 
Let $$v_b=v_b(d)$$ be the volume of the body $(x_1,\dots,x_{d-1}) \in \R^{d-1}$ restricted by
 \begin{equation}\label{boy23}
 \Big|x_jb^j+\sum_{j/2 \leq k \leq j-1} {k \choose 2k-j} b^{2k-j} x_k\Big|\leq 1
 \end{equation}
 for $j=1,\dots,d-1$,  and 
 \begin{equation}\label{boy24}
 \Big|\sum_{j/2 \leq k \leq d-1} {k \choose 2k-j} b^{2k-j} x_k \Big| \leq 1,
\end{equation}
 for $ j=d,\dots,2d-2$. Then, the number of integer vectors 
 $(a_1,\dots,a_{d-1})$ satisfying
 \begin{equation}\label{coff1044}
 	\Big|a_j+\sum_{j/2 \leq k \leq j-1} {k \choose 2k-j} b^{2k-2j} a_k \Big|\leq T/b^j
 \end{equation}
for $j=1,\dots,d-1$
and 
 \begin{equation}\label{coff44}
 	\Big|\sum_{j/2 \leq k \leq d-1} {k \choose 2k-j} b^{2k-j} a_k \Big| \leq T
 \end{equation}
 for $j=d,\dots,2d-2$ 
  in the corresponding blown-up body is
 $$v_b T^{d-1}+O(T^{d-2})$$ (see, e.g., \cite{peter}). 
 
 Observing that the term ${d \choose 2d-j} b^{2d-j}$ is less than $B_0:=2^d B^{2d}$ and applying this formula to $T=H-B_0$ and $T=H+B_0$ we see that $V(b,H)$ determined by \eqref{coff104} and
 \eqref{coff4} (compare to \eqref{coff1044} and \eqref{coff44})
 is at least $$v_b(H-B_0)^{d-1}+O((H-B_0)^{d-2})$$ and at most 
 $$v_b(H+B_0)^{d-1}+O((H+B_0)^{d-2}).$$ Consequently,
 \begin{equation}\label{coff106}
 	V(b,H)=v_bH^{d-1}+O(H^{d-2})
 	\end{equation}
 for $1 \leq b <B$. 
 
 So far $B$ is any fixed positive integer greater than $1$.
 Assume now that
 $B \leq b \leq (H/10)^{1/d}$.
For $c \in \R$ and $l \geq 1/2$, the length of the interval $|x+c| \leq l$ is $2l \geq 1$, 
so it contains $\lfloor 2l \rfloor + \xi_l$ integer points with some $\xi_l \in [0,1]$. In view of \eqref{coff104} we find that there are
\begin{equation}\label{koi}
(\lfloor 2H/b \rfloor +\xi_1)(\lfloor 2H/b^2 \rfloor +\xi_2)\dots(\lfloor 2H/b^{d-1} \rfloor + \xi_{d-1})
\end{equation}
integer vectors $(a_1,\dots,a_{d-1})$ determined by the inequalities
\begin{equation}\label{coff22}
|c_1|,\dots,|c_{d-1}| \leq H.
\end{equation}
 Here, $0 \leq \xi_1,\dots,\xi_{d-1} \leq 1$ and $B \leq b \leq (H/10)^{1/d}$.
 
 Now we will choose $B \in \N$.
We claim that there is a positive integer $B$ that depends on $d$ only
such that for each $b$ satisfying
$b \geq B$
 under the assumption \eqref{coff104} the inequalities 
 \begin{equation}\label{koi1}
 |c_d|,\dots,|c_{2d-1}| \leq H
 \end{equation}
automatically hold. 
   
Indeed, for $j=d$, by \eqref{coff2} (which is a consequence of \eqref{coff104}) and $B \leq b \leq (H/10)^{1/d}$, 
we see that the left hand side of \eqref{coff4} is at most
$$
b^d +\sum_{d/2 \leq k \leq d-1} 2^k b^{2k-d} |a_k| < H/10 +H\sum_{j/2 \leq k \leq d-1} 2^k b^{k-d} u_k,
$$ 
which is less than $H$ for
$B$ is large enough. For $d+1 \leq j \leq 2d-2$
the left hand side of \eqref{coff4} is less than 
$$
2^db^{d-1} +H \sum_{j/2 \leq k \leq d-1} 2^k b^{k-j} u_k <2^dH^{1-\frac{1}{d}}+H \sum_{j/2 \leq k \leq d-1} 2^k b^{k-j} u_k.
$$ 
This is less than $H$ for
$b \geq B$, where $B$ is large enough.
This proves the existence of the required 
$B$.

From \eqref{koi} and the fact that \eqref{coff22} implies \eqref{koi1} 
it follows that for each $b$ in the range $B \leq b \leq (H/10)^{1/d}$ we have
\begin{align*}
V(b,H) &=(\lfloor 2H/b \rfloor+\xi_1)(\lfloor 2H/b^2 \rfloor +\xi_2)\dots(\lfloor 2H/b^{d-1} \rfloor +\xi_{d-1}) \\&=\frac{(2H)^{d-1}}{b^{\frac{d(d-1)}{2}}}+O_n\Big(\frac{H^{d-2}}
	{b^{\frac{(d-1)(d-2)}{2}}}\Big).
\end{align*}
Summing over corresponding $b$ we obtain
\begin{align*}
\sum_{B \leq b \leq (H/10)^{1/d}} \frac{(2H)^{d-1}}{b^{\frac{d(d-1)}{2}}} =&
(2H)^{d-1} \Big(\zeta\Big(\frac{d(d-1)}{2}\Big)-\sum_{b=1}^{B-1} b^{-\frac{d(d-1)}{2}}\Big) \\&+O_n\Big(H^{\frac{d-1}{2}+\frac{1}{d}}\Big).
\end{align*}
Also,
$$
\sum_{B \leq b \leq (H/10)^{1/d}} O_n\Big(\frac{H^{d-2}}
{b^{\frac{(d-1)(d-2)}{2}}}\Big)=O_n(H^{d-2})
$$
for $d \geq 4$, while for $d=3$
we have
$$\sum_{B \leq b \leq (H/10)^{1/3}} O\Big(\frac{H}
{b}\Big)=O(H \log H). 
$$
For $d \geq 4$, we have $(d-1)/2+1/d \leq d-2$, and so 
\begin{equation}\label{coff200}
	\begin{split}
	\sum_{B \leq b \leq (H/10)^{1/d}} V(b,H) =&(2H)^{d-1} \Big(\zeta\Big(\frac{d(d-1)}{2}\Big)-\sum_{b=1}^{B-1} b^{-\frac{d(d-1)}{2}}\Big) \\&+O_n(H^{d-2}).
	\end{split}
\end{equation}
For $d=3$ we obtain
\begin{equation}\label{coff201}
	\sum_{B \leq b \leq (H/10)^{1/3}} V(b,H)=(2H)^{2} (\zeta(3)-\sum_{b=1}^{B-1} b^{-3})+O(H^{\frac{4}{3}}).
\end{equation}

Finally, suppose that $(H/10)^{1/d} < b <6H^{1/d}$.
There are less than $6H^{1/d}$ of such $b$. Also, by
\eqref{coff2},  there are at most $uH^{d-1}/b^{d(d-1)/2}$
vectors $(a_1,\dots,a_{d-1})$ for which $\rH(f) \leq H$, where $u$ is a constant that depends only on $d$. 
This gives at most 
$$
6u \frac{H^{\frac{1}{d}+d-1}}{b^{\frac{d(d-1)}{2}}} < 6u \frac{H^{\frac{1}{d}+d-1}}{(H/10)^{\frac{d-1}{2}}} = O_n\Big(H^{\frac{d-1}{2}+\frac{1}{d}}\Big)
$$
such polynomials $f$. The exponent here is $4/3$ for $d=3$ and less than $d-2$ for $d \geq 4$.  Combining this with \eqref{coff103}, \eqref{coff106}, \eqref{coff200}, \eqref{coff201} we derive that
\begin{equation}\label{coff203}
	\#D_{6,3}(H)=H^2(4+2\sum_{1 \leq b <B}v_b(3) +8(\zeta(3)-\sum_{b=1}^{B-1} b^{-3}))+O(H^{\frac{4}{3}})
\end{equation}
(here $d=3$) and 
\begin{align*}
	\#D_{n,d}(H)=&(2H)^{d-1}+2H^{d-1}\sum_{1 \leq b <B}v_b(d)  \\&+2(2H)^{d-1} \Big(\zeta\Big(\frac{d(d-1)}{2}\Big)-\sum_{b=1}^{B-1} b^{-\frac{d(d-1)}{2}}\Big)+O_n(H^{d-2}),
\end{align*}
where $d=n/2 \geq 4$. 
This combined with \eqref{boi1} proves Theorem~\ref{thm:monicasy} with
the constant
\begin{equation}  \label{eq:kappa}
\kappa_n=2^{d-1}+2\sum_{b=1}^{B-1} v_b(d) + 2^d \Big(\zeta\Big(\frac{d(d-1)}{2}\Big)-\sum_{b=1}^{B-1} b^{-\frac{d(d-1)}{2}}\Big).
\end{equation}

To derive Theorem~\ref{thm:monicasy00} from \eqref{boi0} and \eqref{coff203} we will first
show that
for $d=3$ a suitable $B$ is $B=3$. Indeed, 
$c_1=ba_1$, $c_2=a_1+a_2b^2$, $c_3=b^3+2a_2b$, $c_4=3b^2+a_2$ and $c_5=3b$. Clearly, $|c_1| \leq H$ is equivalent to $|a_1| \leq H/b$, 
and $|c_2| \leq H$ is equivalent to $-(H+a_1)/b^2\leq a_2 \leq (H-a_1)/b^2$. 
This yields
$$|a_2| \leq \frac{H+|a_1|}{b^2} \leq \frac{H}{b^2}+\frac{H}{b^3}.$$
In order to see that $B=3$ is a suitable choice we need to show that the bounds on $|a_1|$ and $|a_2|$ yield $|c_3|, |c_4| \leq H$
for $3 \leq b \leq (H/10)^{1/3}$. (It is clear that $|c_5| \leq H$ for $H$ large enough.)

 Indeed, 
$$|c_3| \leq b^3+2|a_2|b \leq \frac{H}{10}+\frac{2H}{b}+\frac{2H}{b^2}\leq H \Big(\frac{1}{10}+\frac{2}{3}+\frac{2}{9}\Big)<H.$$
Also,
$$|c_4| \leq 3\Big(\frac{H}{10}\Big)^{\frac{2}{3}}+|a_2| <H^{\frac{2}{3}}+\frac{H}{9}+\frac{H}{27}<H$$
for $H$ large enough.
So we can indeed select $B=3$.

It remains to calculate $v_1=v_1(3)$ and $v_2=v_2(3)$.  For $d=3$, by \eqref{boy23} and \eqref{boy24}, $v_1$ is the volume of the body restricted by $|x_1| \leq 1$, $|x_1+x_2| \leq 1$, $|2x_2| \leq 1$
and $|x_2| \leq 1$. It is easy to see that $$v_1=\frac{7}{4}.$$ Similarly,
by \eqref{boy23} and \eqref{boy24},  
$v_2$ is the volume of the body restricted by the inequalities
$|2x_1| \leq 1$, $|x_1+4x_2| \leq 1$, $|4x_2| \leq 1$ and $|x_2| \leq 1$.  By a simple calculation, we find that $$v_2=\frac{7}{16}.$$ By \eqref{coff203} with $B=3$, it follows that
\begin{align*}
\#D_{6,3}(H)&=H^2\Big(4+\frac{7}{2}+\frac{7}{8}+8\Big(\zeta(3)-1-\frac{1}{8}\Big) \Big)+O(H^{\frac{4}{3}})
\\&=H^2\Big(8\zeta(3)-\frac{5}{8}\Big) +O(H^{\frac{4}{3}}). 
\end{align*}
This completes the proof of Theorem~\ref{thm:monicasy00}
due to \eqref{eq:Dn} and \eqref{boi0}.

\section{Proofs for the non-monic case} 
\label{sec:non-monic}

\subsection{Proof of Theorem \ref{thm:non-monic-nd}}

The strategy is the same as that in the proof of Theorem \ref{thm:monic-nd}. 

Write $n=dm$. 
Note that for any polynomial $f \in D_{n,d}^*(H)$, 
we have $f=g\circ h$ for some origin-preserving polynomials $g$ and $h$ with $\deg g=d$ and $\deg h=m$.  
By Lemma~\ref{lem:height},  we obtain 
$$
\rH(h) \leq |a_d| \rH(h) \le c_1 \rH(f)^{\frac{1}{d}} \le c_1 H^{\frac{1}{d}}, 
$$
where $c_1 > 0$ is a constant depending only on $n$. 
Besides, by Lemma~\ref{lem:height}, there exists a constant $c_2 > 0$ depending only on $n$ such that  
the absolute value of the coefficient of each monomial $x^j$ in $g$, $1 \le j \le d$, is at most $c_2 \rH(f)/\rH(h)^{j}$. 
Hence, as before, by estimating the possibilities of $g$ and $h$, we deduce that 
\begin{equation*}   
\begin{split}
\#D_{n,d}^*(H) & \le \sum_{1 \leq b \leq c_1 H^{1/d}} 2(2b+1)^{m-1} \cdot \frac{(c_2H)^{d}}{b^{1+2+\cdots + d}} \\
& \ll_n  \sum_{1 \leq b \leq c_1 H^{1/d}}b^{m-1}\cdot \frac{H^{d}}{b^{1+2+\cdots + d}} \\
& \ll_n H^{d}\sum_{1 \leq b \leq c_1 H^{1/d}}\frac{1}{b^{\frac{d(d+1)}{2}+1-m}}  \\
& \ll_n  \left\{
\begin{array}{rl}
H^d & \textrm{if $d(d+1) > 2m$}, \\
H^d \log H  & \text{if $d(d+1) = 2m$}, \\ 
 H^{\frac{d-1}{2}+\frac{m}{d}} & \text{if $d(d+1) < 2m$}. 
\end{array} \right.
 \end{split}
\end{equation*} 
The desired upper bounds then follow. 

For the lower bound in \eqref{gui11} note that
by counting polynomials of the form 
\begin{align*}
(a_d x^{d} + & a_{d-1} x^{d-1} + \cdots + a_1 x ) \circ x^m  \\
& = a_d x^{dm} + a_{d-1} x^{(d-1)m} + \cdots + a_1 x^m, 
\end{align*}
where  $1 \leq a_i \leq H$ for each $1\le i \le d$ and $\gcd(a_d, \dots, a_1)=1$, by Lemma~\ref{lem:factor11}, we get 
$$
 D_{n,d}^*(H) \ge 0.355H^d. 
$$ 
This completes the proof of \eqref{gui11}.

To prove the lower bound in \eqref{gui31}
we consider origin-preserving $h \in \Z[x]$ of degree $m$ with positive relatively prime coefficients satisfying 
$\rH(h) \leq H^{1/d}/(2m)$ 
and origin-preserving $g(x)=a_d x^d+a_{d-1}x^{d-1}+\dots+a_1x \in \Z[x]$ with relatively prime coefficients satisfying 
$1 \leq a_i  \leq H^{(d-i)/d}$ for $i=1,\dots,d$. 
Then, as $\rH(f) \le \rL(f) = \rL(g \circ h)$  and $\rL(h) \le H^{1/d}/2$,  using \eqref{eq:length} we obtain
$$
\rH(f) \leq \sum_{i=1}^{d} |a_i| \rL(h)^i  \leq H \sum_{i=1}^{d} \frac{1}{2^i} <H.
$$
By Lemma~\ref{lem:injective-Z}, distinct pairs $(g,h)$ give distinct $f$ with relatively prime coefficients satisfying $\rH(f) < H$. 
By Lemma~\ref{lem:factor11}, for $H$ large enough there are at least  
\begin{align*}
0.35 \big(\frac{H^{\frac{1}{d}}}{2m} \big)^m \cdot 0.35 \prod_{i=1}^{d} H^{\frac{d-i}{d}} > \frac{0.12H^{\frac{d-1}{2}+\frac{m}{d}}}{(2m)^m} 
\end{align*}
 of such pairs. 
This proves the lower bound in \eqref{gui31} for $H$ large enough.

The proof of the lower bound in \eqref{gui21} is similar to that of \eqref{gui2}. As above, we consider origin-preserving $h$
with, say, leading coefficient exactly $B$ 
 and other $m-1$ coefficients all being in $\{1,\dots,B\}$ and coprime. By Lemma~\ref{lem:factor11}, there are at least $0.355B^{m-1}$ of such $h$. 
(Note that $(d+1)d=2m$ combined with $d \geq 2$ yields $m \geq 3$, so $m-1 \geq 2$.)
To each of them we assign the origin-preserving polynomial $g(x)=a_dx^d+a_{d-1}x^{d-1}+\dots+a_1x \in \Z[x]$ with $1 \le a_i \leq H/(n^dB^i)$. 
Then, using Lemmas~\ref{lem:factor11} and \ref{lem:injective-Z},
and summing over $B$ as in the range 
\eqref{gui4} we complete the proof of the lower bound in \eqref{gui21}.

\subsection{Proof of Theorem \ref{thm:non-monic-R}}

Using Theorem \ref{thm:non-monic-nd} (instead of Theorem \ref{thm:monic-nd}), 
we can get a proof by applying the same strategy as in the proof of Theorem \ref{thm:monic-R}. 

Here we only explain the case  when $n \ne 6$ and $n \ne \ell^2$. 
In this case, $n / \ell \ge 4$. 

Now, as before, using Theorem~\ref{thm:non-monic-nd} and Lemma~\ref{lem:factor}, 
we get that if $d(d+1) > 2m$ and $d \ne n/\ell$, then we must have 
$$
\#D_{n,d}^*(H) \ll_n H^{\frac{n}{\ell}-2}; 
$$
while if $d(d+1) = 2m$ and $d \ne n/\ell$, we must have 
$$
\#D_{n,d}^*(H) \ll_n H^{\frac{n}{\ell}-2} \log H. 
$$
Also, similarly as before, using Theorem~\ref{thm:non-monic-nd}, if $d(d+1) < 2m$ and $d=2$, we obtain
$$
\#D_{n,d}^*(H) \ll_n  H^{\frac{n}{\ell}-\frac{3}{2}}; 
$$
while if $d(d+1) < 2m$ and $d \ge 3$, we must have 
$$
\#D_{n,d}^*(H) \ll_n  H^{\frac{n}{\ell}-\frac{8}{3}}. 
$$
Then, the desired result follows.

\section*{Acknowledgements} 
The authors would like to thank the referee for his/her careful reading and  valuable comments. 
The work was motivated by a talk of Professor Joachim von zur Gathen at the University of New South Wales. 
The authors would like to thank him for his stimulating talk and for his valuable comments. 
The research of the first named author has received funding from
European Social Fund (project No 09.3.3-LMT-K-712-01-0037) under grant agreement with
the Research Council of Lithuania (LMTLT).
The research of the second named author was partly supported by the Guangdong Basic and Applied Basic Research Foundation (No. 2022A1515012032), 
by a Macquarie University Research Fellowship and by the Australian Research Council Grant DE190100888.

\end{document}